\newtheorem{theorem}{Theorem}[section]
\newtheorem{lemma}[theorem]{Lemma}
\newtheorem{proposition}[theorem]{Proposition}
\theoremstyle{definition}
\newtheorem{definition}[theorem]{Definition}
\newtheorem{example}[theorem]{Example}
\newtheorem{remark}[theorem]{Remark}
\newtheorem{algorithm}[theorem]{Algorithm}
\theoremstyle{remark}
\def\aa{\ensuremath{\mathfrak{a}}}
\def\cc{\ensuremath{\mathfrak{c}}}
\def\TT{\mathbb{T}}
\def\KK{\mathbb{K}}
\def\ZZ{\mathbb{Z}}
\def\QQ{\mathbb{Q}}
\def\OO{\mathcal O}
\def\myident{\hspace*{0.6cm}}
\def\<{\langle}
\def\>{\rangle}
\def\trop#1{\mathrm{trop}(#1)}
\def\good{/\!\!/}
\newcommand{\GKZ}{{\rm GKZ}}
\newcommand{\gkz}{\GKZ}
\newcommand{\dual}{\vee}
\newcommand{\seite}{\preceq}
\newcommand{\gdw}{\Leftrightarrow}
\newcommand{\cone}[1]{\mathrm{cone}(#1)}
\newcommand{\thickhline}{%
    \noalign {\ifnum 0=`}\fi \hrule height 1pt
    \futurelet \reserved@a \@xhline
}
\begin{document}

\markboth{Simon~Keicher}
{Computing the GIT-fan}

\author[S.~Keicher]{Simon~Keicher}
\address{Mathematisches Institut, Universit\"at T\"ubingen,
Auf der Morgenstelle 10, 72076 T\"ubingen, Germany}
\email{keicher@mail.mathematik.uni-tuebingen.de}

\title{Computing the GIT-fan}
\subjclass[2000]{14Q99, 14L24}

\maketitle

\begin{abstract}
We present an algorithm to compute the GIT-fan of 
algebraic torus actions on affine varieties.
\end{abstract}


\section{Introduction}

Given an action of a connected reductive linear algebraic
group~$H$ on an algebraic variety~$X$, Mumford constructed open
$H$-sets $U \subseteq X$ admitting a good quotient
$U \to U \good H$,
see~\cite{mumford:GIT}.
His construction depends on the choice of a $H$-linearized
ample line bundle on $X$
and, in general, one obtains several distinct quotients.
This variation of GIT-quotients is described by a
combinatorial structure, the so-called \textit{GIT-fan};
see the work by Dolgachev/Hu~\cite{dolgachevhu:GIT}
and Thaddeus~\cite{thaddeus:GIT} for ample
bundles on a projective variety $X$ and~\cite{gitviacoxrings}
for the affine case.

In the present note, we provide an algorithm for computing
the GIT-fan describing the quotients arising from the possible
linearizations of the trivial bundle for the case that $X$
is affine and $H$ is a torus.
Note that the torus case is essential for the general
one: if a connected reductive group $G$ acts on $X$, then
the associated GIT-fan equals that of the action of the
torus $G/G^s$ on the affine variety $X \good G^s$, where
$G^s \subseteq G$ is a maximal connected semisimple
subgroup, see~\cite{gitviacoxrings}.
Moreover, our setting also occurs in the context of Mori dream
spaces: there the Neron-Severi torus acts on the total
coordinate space and the GIT-fan of this action is precisely
the Mori chamber decomposition of the effective cone,
see~\cite{mdsgit}.

Our algorithm is based on the construction of the GIT-fan
provided in~\cite{hausen:amplecone}.
We assume that $X \subseteq \KK^r$ is given by concrete
equations.
The main computational steps are to determine the toric
orbits of $\KK^r$ intersecting $X$,
see section \ref{section:afaces},
a suitable number of the so-called
\textit{orbit cones} of the $H$-action on $X$
and the \textit{GIT-chamber} of a given weight,
see section \ref{section:chambers}.
The GIT-fan is then obtained by traversing a spanning
tree of its dual graph;
this idea also shows up
in the computation of Gr\"obner-fans, symmetric fans
and tropical varieties as presented
in~\cite{groebfan,symmfans,jensen:tropvars}.
We discuss some examples in section \ref{section:examples}.
At the moment, a \texttt{Maple/convex}~\cite{convex}
implementation of our algorithm is
available~\cite{gitfanlib}.

The author would like to thank J\"urgen~Hausen for valuable discussions and comments and 
 the referee for helpful suggestions.

\section{Computing the GIT-fan}\label{section:chambers}

Throughout the whole note, $\KK$ is an algebraically closed 
field of characteristic zero.
In this section, we first recall the necessary 
concepts from~\cite{hausen:amplecone} 
and thereby fix our notation.
Then we present and prove our algorithms for the GIT-fan.
Aspects of efficiency of the algorithms are discussed 
at the end of this section.

We will work with the following description of the toric 
orbits of $\KK^r$ in terms of faces of the orthant 
$\gamma := \QQ_{\geq 0}^r$: 
the standard torus $\TT^r:=(\KK^*)^r$ acts via
\[
\TT^r\times \KK^r\to \KK^r\,,\qquad
 t\cdot x = (t_1x_1,\ldots,t_rx_r)\,.
\]
Given a face $\gamma_0\seite\gamma$, define the reduction 
of an $r$-tuple $z$, of e.g.~numbers, along $\gamma_0$ as
\[
  z_{\gamma_0} := (z_1',\ldots,z_r')\,,
 \qquad
  z_i':=\begin{cases}
         z_i, & e_i\in \gamma_0\\
 	 0,   & e_i\not\in \gamma_0,
        \end{cases}
\]
where $e_1,\ldots,e_r \in \QQ^r$ denote  the canonical basis 
vectors. Then, one has a bijection
\[
 \{\text{ faces of $\gamma$ } \}
\ \leftrightarrow\ 
 \{\text{ $\TT^r$-orbits }\}\,,
 \qquad
 \gamma_0\ \mapsto\ \TT^r_{\gamma_0} := \{ t_{\gamma_0};\ t\in\TT^r \}\,.
\]
Note, that in the notation of \cite{fulton}, 
$\TT^r_{\gamma_0}$ is the $\TT^r$-orbit 
through the distinguished point corresponding to the dual face 
$\gamma_0^*:=\gamma^\perp\cap\gamma^\dual\seite\gamma^\dual$.

\begin{definition}\label{def:aface}
Let $\aa\subseteq\KK[T_1,\ldots,T_r]$ be an ideal.
A face $\gamma_0$ of the positive orthant $\gamma$ is an
 \textit{$\aa$-face} if 
$V(\TT^r_{\gamma_0};\,\aa) \not=\emptyset$.
\end{definition}

If $X \subseteq \KK^r$ is the zero set of 
the ideal $\mathfrak{a} \subseteq \KK[T_1,\ldots, T_n]$,
then the $\mathfrak{a}$-faces correspond exactly to the 
$\TT^r$-orbits intersecting $X$ nontrivially.
The computation of $\aa$-faces will be discussed in 
section \ref{section:afaces}.

We are ready to introduce GIT-chambers and the GIT-fan.
Assume that the defining ideal 
$\aa\subseteq\KK[T_1,\ldots,T_r]$
of $X \subseteq \KK^r$ is monomial-free
and homogeneous with respect to a 
$\ZZ^k$-grading
\[
 q_i := \deg(T_i)\in \ZZ^k\,,\qquad
 1\leq i\leq r\,.
\]
Then the corresponding action of the torus 
$H = \TT^k$ on $\KK^r$ leaves the zero 
set $X = V(\KK^r;\,\aa)\subseteq \KK^r$
invariant.
Let $Q$ be the $k \times r$ matrix with 
columns $q_1,\ldots,q_r$.
We assume that the cone $Q(\gamma) \subseteq \QQ^k$ 
is of dimension $k$.

A \textit{projected $\aa$-face} is a cone $Q(\gamma_0)$ with 
$\gamma_0\seite\gamma$ an $\aa$-face.
In \cite{hausen:amplecone} these are called \textit{orbit cones}.
Write $\Omega_\aa$ for the set of all projected $\aa$-faces.

\begin{definition}
The \textit{GIT-chamber} of a vector 
$w \in Q(\gamma) = \cone{q_1,\ldots,q_r} \subseteq \QQ^k$ 
is the convex, polyhedral cone
\[
\lambda(w) 
:= 
\bigcap_{w\in\vartheta\in \Omega_\aa} \vartheta
\subseteq 
\QQ^k.
\]
The \textit{GIT-fan} of the $H$-action on $X = V(\KK^r;\,\aa)$ 
is the set $\Lambda(\aa,Q) = \{\lambda(w);\,w \in Q(\gamma)\}$ 
of all GIT-chambers.
\end{definition}

As the name suggests, $\Lambda(\aa,Q)$ is indeed a fan in $\QQ^k$ 
with $Q(\gamma)$ as its support, 
see~\cite[Thm. III.1.2.8]{coxrings}.
Note, however, that the cones of the GIT fan need not be pointed
in general.
The set of $j$-dimensional cones of $\Lambda(\aa,Q)$ will be 
denoted by $\Lambda(\aa,Q)^{(j)}$.

We turn to the computation of GIT-chambers.
Let $\Omega := \{Q(\gamma_0);\,\gamma_0\seite\gamma\}$ 
be the set of projected faces of $\gamma$ and let 
$\Omega^{(j)}\subseteq\Omega$ be the subset of 
$j$-dimensional cones.
Similarly, $\Omega_\aa^{(j)} \subseteq \Omega_\aa$ 
is the subset of $j$-dimensional projected $\aa$-faces.
We have
\[
\Omega_\aa^{(k)}
\subseteq
\Omega_0^{(k)} 
:= 
\left\{\vartheta\in\Omega^{(k)};\,
\text{ all facets of $\vartheta$ are in }
\Omega_\aa^{(k-1)}\right\}
\subseteq
\Omega^{(k)}
\,,
\]
where the first containment is due to the fact 
that faces of projected $\aa$-faces are again 
projected $\aa$-faces, 
see \cite[Cor. 2.4]{hausen:amplecone}.
Given a vector $w$ in the relative interior 
$Q(\gamma)^\circ$, 
set $\Omega^{(k)}(w)$ for the collection of all
 $\vartheta\in\Omega^{(k)}$
 that contain $w$.
The next algorithm determines 
the associated GIT-chamber 
$\lambda = \lambda(w)$.

\goodbreak
\begin{remark}\label{rem:efforbitcones}
\begin{enumerate}
\item 
The set $\Omega^{(j)}$ is computed directly
by taking cones over suitable subsets of 
$\{q_1, \ldots, q_r\}$.
\item
The computation of $\Omega^{(j)}(w)$ can be sped 
up via \textit{point location}~\cite{pointloc},
i.e. we only consider cones $\vartheta\in\Omega^{(k)}$ with 
at least one generator lying on the same side as $w$ 
of a random hyperplane subdividing $Q(\gamma)$. 
\item
For an efficient computation of
$\Omega_\aa^{(j)}$, one reduces the amount
of $\aa$-face tests as follows.
Check for any $\vartheta\in\Omega^{(j)}$ if some 
$\gamma_0 \preceq \gamma$ with 
$Q(\gamma_0) = \vartheta$ is an  $\aa$-face.
As soon as such a face has been found, 
all other faces projecting to $\vartheta$
may be ignored in subsequent tests.
\end{enumerate}
\end{remark}

\begin{algorithm}[GIT-chamber]\label{algo:chamberoc}
Let $w\in Q(\gamma)^\circ$ be given.
Assume that $\Omega^{(k)}(w)$ and 
$\Omega_\aa^{(k-1)}$ are known.

\begin{enumerate}
\item[\tiny 1] $\lambda := \QQ^k$
\item[\tiny 2] {\bf for each} $\vartheta\in\Omega^{(k)}(w)$ 
\item[\tiny 3] \myident {\bf if} $\vartheta\not\supseteq \lambda$ 
{\bf and} all facets of $\vartheta$ are in $\Omega_\aa^{(k-1)}$ 
\item[\tiny 4] \myident\myident $\lambda := \lambda \cap \vartheta$
\item[\tiny 5] {\bf return} $\lambda$
\end{enumerate}
\end{algorithm} 

\begin{lemma}\label{lem:facesintersections}
 Let $\Sigma\subseteq\QQ^k$ be a pure $k$-dimensional fan  
with convex support $|\Sigma|$ 
and let 
$\tau\in\Sigma$ be such that 
$\tau\cap |\Sigma|^\circ\ne \emptyset$.
Then $\tau$ is the intersection 
over all $\sigma\in\Sigma^{(k)}$ 
satisfying $\tau\seite\sigma$.
\end{lemma}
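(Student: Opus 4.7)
The plan is to prove the nontrivial inclusion $\tau^{*} \subseteq \tau$, where $\tau^{*} := \bigcap_{\tau \preceq \sigma \in \Sigma^{(k)}} \sigma$; the reverse containment $\tau \subseteq \tau^{*}$ is immediate since $\tau$ is a face of each $\sigma$ in the intersection. The strategy is to select a distinguished point $v$ in the relative interior $\tau^\circ$ that also lies in the topological interior $|\Sigma|^\circ$, and then combine a perturbation argument with the supporting hyperplane that realizes $\tau$ as a face of an appropriately chosen maximal cone.

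First I would establish that $\tau^\circ \cap |\Sigma|^\circ \ne \emptyset$. Given $u \in \tau \cap |\Sigma|^\circ$ supplied by the hypothesis and any auxiliary $v_0 \in \tau^\circ$, the segment $(1-t)u + tv_0$ lies in $\tau^\circ$ for $t \in (0,1]$ by a standard property of relative interiors, and it remains inside the open set $|\Sigma|^\circ$ for sufficiently small $t$. Fix such a $v$.

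Next I would exploit the following local observation. Any maximal cone $\sigma' \in \Sigma^{(k)}$ containing $v$ must have $\tau$ as a face, because $\sigma' \cap \tau$ is a common face of both $\sigma'$ and $\tau$ (fan property) meeting $\tau^\circ$, and such a face must equal $\tau$. Since $\Sigma$ has only finitely many maximal cones, the union of those maximal cones not containing $v$ is a closed set missing $v$, so there exists a neighborhood $U$ of $v$ with $U \cap |\Sigma| \subseteq \bigcup_{\tau \preceq \sigma} \sigma$.

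Given $x \in \tau^{*}$, I would then consider the perturbation $y_\epsilon := (1+\epsilon)v - \epsilon x$. Since $v \in |\Sigma|^\circ$, the point $y_\epsilon$ lies in $U \cap |\Sigma|$ for sufficiently small $\epsilon > 0$, hence in some maximal cone $\sigma'$ with $\tau \preceq \sigma'$. Writing $\tau = \sigma' \cap \ker \ell$ for a linear form $\ell$ non-negative on $\sigma'$, we get $\ell(v) = 0$, $\ell(x) \geq 0$ (as $x \in \tau^{*} \subseteq \sigma'$), and $\ell(y_\epsilon) = -\epsilon\,\ell(x) \geq 0$; together these force $\ell(x) = 0$, so $x \in \sigma' \cap \ker \ell = \tau$. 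The main obstacle I anticipate is justifying the existence of $v$ and the local description of $|\Sigma|$ around it; once these are in place, the closing hyperplane computation is essentially automatic.
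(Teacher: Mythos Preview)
The paper states this lemma without proof, treating it as a standard fact about pure full-dimensional fans with convex support; there is therefore no argument in the paper to compare against. Your proof is correct and self-contained. The one step worth making explicit for a reader is the claim ``$\tau = \sigma' \cap \ker \ell$ for a single linear form $\ell$ nonnegative on $\sigma'$'': this is the well-known fact that every face of a polyhedral cone is exposed by an element in the relative interior of the corresponding dual face, but a skeptical reader might momentarily worry that a face of codimension larger than one requires several hyperplanes. Once that is granted, your perturbation $y_\epsilon = (1+\epsilon)v - \epsilon x$ together with the local covering of $|\Sigma|$ near $v$ by maximal cones having $\tau$ as a face yields the inclusion $\tau^* \subseteq \tau$ cleanly.
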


\begin{lemma}\label{lem:detbywalls2}
Let $\lambda\in\Lambda(\aa,Q)^{(k)}$ and $\vartheta_0\in\Omega_0^{(k)}$.
If $\vartheta_0^\circ \cap \lambda^\circ \not= \emptyset$
then $\lambda\subseteq\vartheta_0$.
\end{lemma}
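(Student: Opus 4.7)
My plan is to argue by contradiction, exploiting two facts: first, that for every point $p \in \lambda^\circ$ one has $\lambda = \lambda(p) = \bigcap_{p \in \vartheta \in \Omega_\aa} \vartheta$, so that $\lambda$ is contained in every projected $\aa$-face whose interior is met by $\lambda^\circ$; and second, that every face of $\vartheta_0$ of dimension strictly less than $k$ is a projected $\aa$-face. The latter is because all facets of $\vartheta_0$ lie in $\Omega_\aa^{(k-1)}$ by assumption, and any lower-dimensional face of $\vartheta_0$ is a face of such a facet, hence again a projected $\aa$-face by \cite[Cor.~2.4]{hausen:amplecone}.

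First I would fix a point $w \in \vartheta_0^\circ \cap \lambda^\circ$ provided by the hypothesis. Suppose, for contradiction, that $\lambda \not\subseteq \vartheta_0$. Since $\lambda^\circ$ is dense in $\lambda$ and $\vartheta_0$ is closed, one can then choose $v \in \lambda^\circ \setminus \vartheta_0$. Convexity of the relative interior puts the entire open segment $(w,v)$ inside $\lambda^\circ$. Because $w$ lies in the interior of $\vartheta_0$ while $v$ lies outside $\vartheta_0$, this segment must exit $\vartheta_0$; let $p \in (w,v)$ be the boundary-crossing point. Then $p \in \lambda^\circ \cap \partial\vartheta_0$, so $p$ lies in the relative interior of a unique face $F \seite \vartheta_0$ with $\dim F < k$.

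Next I would combine the two facts recalled above. The face $F$ is a projected $\aa$-face, and since $p \in \lambda^\circ$ we get $\lambda = \lambda(p) \subseteq F$; this contradicts $\dim \lambda = k > \dim F$ and proves $\lambda \subseteq \vartheta_0$.

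The only place that requires care is the step identifying $p$ with a point in the relative interior of an honest face of $\vartheta_0$ and ensuring that face is in $\Omega_\aa$. The facet hypothesis on $\vartheta_0$ makes this clean even when $p$ lands on a face of higher codimension, so the argument does not need a perturbation of $v$; without the face-closure property from \cite[Cor.~2.4]{hausen:amplecone}, one would have to wiggle $v$ within $\lambda^\circ \setminus \vartheta_0$ to force $p$ onto a facet, but with that property in hand, the proof is immediate.
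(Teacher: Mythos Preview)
Your proof is correct and follows essentially the same contradiction argument as the paper: pick points in $\lambda^\circ$ inside and outside $\vartheta_0$, locate the boundary crossing on the connecting segment, and use that the relevant boundary face is a projected $\aa$-face to contradict $\dim\lambda = k$. The only difference is cosmetic: the paper asserts the crossing lies in the relative interior of some facet $\eta_0\in\Omega_\aa^{(k-1)}$ (tacitly a general-position step), whereas you allow it to land on an arbitrary proper face $F$ and invoke \cite[Cor.~2.4]{hausen:amplecone} to get $F\in\Omega_\aa$; your handling of that edge case is in fact a bit cleaner.
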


\begin{proof}
 Suppose 
$\lambda\not\subseteq\vartheta_0$.
Choose $w\in\lambda^\circ \setminus \vartheta_0$ and 
$v\in\vartheta_0^\circ\cap\lambda^\circ$.
Then $\cone{v,w}\cap (\vartheta_0\setminus \vartheta_0^\circ)$ 
lies on some facet  $\eta_0\seite\vartheta_0$. 
By construction, $\eta_0^\circ\cap\lambda^\circ\not=\emptyset$.
Since $\eta_0\in \Omega_\aa^{(k-1)}$ holds, 
$\lambda$ is not a GIT-chamber; a contradiction.
\end{proof}

\begin{proof}[Proof of Algorithm \ref{algo:chamberoc}]
The algorithm terminates with a cone 
$\lambda \subseteq \QQ^k$ containing the given 
$w \in Q(\gamma)^\circ$ and our task is to show that 
$\lambda = \lambda(w)$ holds.
For this we establish
\[
\lambda
\ = \ 
\bigcap_{w \in \vartheta \in \Omega_0^{(k)}} \vartheta
\ = \ 
\bigcap_{w \in \vartheta \in \Omega_{\aa}^{(k)}} \vartheta
\ = \ 
\lambda(w).
\]
The first equality is due to the algorithm.
The third one follows from Lemma~\ref{lem:facesintersections}.
Moreover, in the middle one, the inclusion
``$\subseteq$'' follows from 
$\Omega_0^{(k)} \supseteq \Omega_{\aa}^{(k)}$.
Thus we are left with verifying ``$\supseteq$''
of the middle equality.

First suppose that $\lambda(w)$ is of full 
dimension.
Then, for any $\vartheta_0 \in \Omega_0^{(k)}$
with $w \in \vartheta_0$, we obtain 
$\vartheta_0^\circ \cap \lambda(w)^\circ \ne \emptyset$,
because $w \in \lambda(w)^\circ$ holds.
Lemma~\ref{lem:detbywalls2} shows 
$\lambda(w) \subseteq \vartheta_0$. 
Thus, we obtain $\lambda \supseteq \lambda(w)$.
The case of $\dim(\lambda(w)) < k$ then follows
from the observation that $\lambda(w)$ is 
the intersection over all 
fulldimensional chambers $\lambda(w')$ with $w\in\lambda(w')$,
see Lemma~\ref{lem:facesintersections}.
\end{proof}

Working with $(k-1)$-dimensional projected 
$\aa$-faces in Algorithm~\ref{algo:chamberoc} 
simplifies the necessary $\aa$-face tests 
compared to the following naive variant of the 
algorithm using $k$-dimensional ones.

\begin{algorithm}[GIT-chamber, v2]
\label{algo:chamberocv2}
Let $w\in Q(\gamma)^\circ$ be given
and assume that $\Omega^{(k)}(w)$ 
is known.

\begin{enumerate}
\item[\tiny 1] $\lambda := \QQ^k$
\item[\tiny 2] {\bf for each} $\vartheta\in\Omega^{(k)}(w)$ 
\item[\tiny 3] \myident {\bf if} $\vartheta\not\supseteq \lambda$ 
{\bf and} there is an $\aa$-face $\gamma_0 \preceq \gamma$ 
with $Q(\gamma_0) = \vartheta$ 
\item[\tiny 4] \myident\myident $\lambda := \lambda \cap \vartheta$
\item[\tiny 5] {\bf return} $\lambda$
\end{enumerate}
\end{algorithm} 

The naive variant~\ref{algo:chamberocv2}, in contrast, 
involves fewer convex geometric operations as~\ref{algo:chamberoc}
and thus can be more efficient if the latter ones 
are limiting the computation.
See Remark~\ref{rem:tables} for a more concrete comparison 
of complexity aspects.


We turn to the GIT-fan. 
Given a full-dimensional cone $\lambda\subseteq \QQ^k$, 
we denote by ${\rm innerfacets}(\lambda)$ the set 
of all facets of $\lambda$ that intersect the relative 
interior $Q(\gamma)^\circ$.
Moreover, for two sets $A,B$,
we shortly write $A \ominus B$ for $(A \cup B ) \setminus (A \cap B)$.
The following algorithm computes the set of maximal 
cones of the GIT-fan $\Lambda(\aa,Q)$.

\begin{algorithm}[GIT-fan]\label{algo:gitfantraversal} \

\begin{enumerate}
\item[\tiny 1] $\Lambda:= \{\lambda_0\}$ with a 
random full-dimensional GIT-chamber $\lambda_0$
\item[\tiny 2] $\mathcal{F}:= {\rm innerfacets}(\lambda_0)$
\item[\tiny 3] {\bf while} there is $\eta \in \mathcal{F}$
\item[\tiny 4] 
\myident Compute the full-dimensional 
GIT-chamber $\lambda'\not\in\Lambda$ with $\eta\seite\lambda'$ 
\item[\tiny 5]  
\myident $\Lambda := \Lambda \cup \{\lambda'\}$
\item[\tiny 6]  
\myident $\mathcal{F}:= \mathcal{F} \ominus {\rm innerfacets}(\lambda')$
\item[\tiny 7]  {\bf return} $\Lambda$
\end{enumerate}

\end{algorithm}

\begin{remark}
In the fourth line of the algorithm, 
let $\lambda\in\Lambda$ be the already found GIT-chamber 
with facet $\eta$.
Then $\lambda'=\lambda(w')$ can be calculated with 
Algorithm~\ref{algo:chamberoc}, where 
$ w' := w(\eta) - \varepsilon\cdot v$
for some $w(\eta)\in\eta^\circ$
and $v\in \lambda^\dual\cap \eta^\perp$
with a suitably small $\varepsilon >0$.
One possibly must reduce $\varepsilon$ until 
$\lambda(w')\cap \lambda = \eta$.
\end{remark}

\begin{proof}[Proof of Algorithm~\ref{algo:gitfantraversal}]
Write $\vert \Lambda \vert$
for the union over all $\lambda \in \Lambda$ 
and $\vert \mathcal{F} \vert$ for the 
union over all $\eta \in \mathcal{F}$.
Then, in each passage of the loop, 
a full-dimensional chamber of 
$\Lambda(\aa,Q)$ is added to $\Lambda$ 
and, after adapting,
$\vert \mathcal{F} \vert \cap Q(\gamma)^\circ$
is the boundary of 
$\vert \Lambda \vert \cap Q(\gamma)^\circ$
with respect to $Q(\gamma)^\circ$.
The set $\mathcal{F}$ is empty if and only 
if $\vert \Lambda \vert$ equals $Q(\gamma)$.
This shows that the algorithm terminates
with the collection of maximal cones of 
$\Lambda(\aa,Q)$ as output.
\end{proof}

Note that Algorithm~\ref{algo:gitfantraversal} traverses a 
spanning tree of the (implicitly known) 
\textit{dual graph} of $\Lambda(\aa,Q)$ which
has the maximal cones as its vertices
and any two are connected by an edge if they 
share a common facet.
Another traversal method for implicitly known graphs 
is \textit{reverse search} by Avis and 
Fukuda~\cite{reversesearch}, which also 
might be applied to our problem by the following 
observation.

\begin{proposition}
The GIT-fan $\Lambda(\aa,Q)$ is the normal fan of a 
polyhedron.
If $\QQ_{\geq 0}^k \subseteq Q(\gamma)$, then
$\Lambda(\aa,Q)^{(k)}$ can be enumerated using 
reverse search.
\end{proposition}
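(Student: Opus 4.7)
The plan is to derive both claims from a strictly convex piecewise-linear support function for $\Lambda(\aa,Q)$: the first via the standard correspondence between such functions and normal fans of polyhedra, the second via Avis--Fukuda reverse search on the vertices of the associated polyhedron.

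For the first claim I would construct the support function explicitly from the orbit cones. Writing each $\vartheta\in\Omega_\aa$ as an intersection of half-spaces $\{\ell_{\vartheta,j}\geq 0\}$, $j=1,\ldots,n_\vartheta$, with the $\ell_{\vartheta,j}$ the (integer) facet equations of $\vartheta$, set
\[
\varphi(w) \ := \ \sum_{\vartheta\in\Omega_\aa}\sum_{j=1}^{n_\vartheta} \max\bigl(0,-\ell_{\vartheta,j}(w)\bigr) \quad (w\in Q(\gamma)),
\]
and extend $\varphi$ by $+\infty$ outside $Q(\gamma)$. Then $\varphi$ is convex, finite and piecewise linear on $Q(\gamma)$, with breaks on the hyperplanes $H_{\vartheta,j}:=\{\ell_{\vartheta,j}=0\}$. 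Because $\Lambda(\aa,Q)$ is the coarsest refinement of $\Omega_\aa$, every wall of $\Lambda(\aa,Q)$ lies in some $H_{\vartheta,j}$ (a facet of a separating orbit cone), and the summand $\max(0,-\ell_{\vartheta,j})$ bends strictly across that wall while all other summands remain convex. Hence $\varphi$ is strictly convex across every wall of $\Lambda(\aa,Q)$, and by the standard dictionary (cf.\ \cite[III.2]{coxrings}) the polyhedron
\[
P \ := \ \{\,u\in\QQ^k : \langle w,u\rangle\le\varphi(w)\text{ for all }w\in Q(\gamma)\,\}
\]
has normal fan $\Lambda(\aa,Q)$.

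For the second claim, $P$ realises the maximal cones of $\Lambda(\aa,Q)$ as vertices and the walls as edges. Its recession cone is polar to $Q(\gamma)$, so any objective $c$ in the relative interior of $Q(\gamma)$ is bounded above on $P$; under $\QQ_{\geq 0}^k\subseteq Q(\gamma)$ one may take the explicit choice $c:=(1,\ldots,1)$ after a small generic perturbation to break ties. Applying Avis--Fukuda reverse search~\cite{reversesearch} to $P$ with objective $c$ and Bland's pivot rule then enumerates the vertices of $P$, equivalently the cones of $\Lambda(\aa,Q)^{(k)}$. The hard part I expect is to run the search \emph{locally}, without materialising $P$: this reduces to two oracles at each maximal chamber $\lambda$, namely enumerating its wall-neighbours (via Algorithm~\ref{algo:chamberoc} applied across each facet of $\lambda$, exactly as in Algorithm~\ref{algo:gitfantraversal}) and selecting the Bland parent (a one-step linear program on the $c$-values of these neighbours). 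Both use only the local combinatorics of $\Lambda(\aa,Q)$ around $\lambda$, so the abstract framework of \cite{reversesearch} applies directly.
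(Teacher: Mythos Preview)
The paper's own proof is a two-line citation: the first claim is \cite[Cor.~10.4]{polyhedraldivisors}, and the second follows from the first together with \cite[Sec.~3]{groebfan}. Your second half (reverse search via local neighbour and Bland-parent oracles, without materialising the polyhedron) is exactly the mechanism of the cited \cite{groebfan} argument and goes through once the first claim is available.

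Your direct construction for the first claim, however, has a genuine gap. You verify that $\varphi$ bends strictly across every wall of $\Lambda(\aa,Q)$, but the ``standard dictionary'' also requires $\varphi$ to be \emph{linear} on every maximal GIT-chamber, and this you never address. Without it, the normal fan of your $P$ is only known to \emph{refine} $\Lambda(\aa,Q)$, not to equal it. The gap is not merely rhetorical. As written, the sum runs over all $\vartheta\in\Omega_\aa$, and for lower-dimensional $\vartheta$ the half-space description is not canonical: the linear equations cutting out $\mathrm{span}(\vartheta)$ depend on a choice of basis. In the paper's $G(2,4)$ example the ray through $q_4=(1,1,0)$ is an orbit cone; if one represents it using the hyperplane $x=y$, then $\varphi$ acquires a break along $x=y$, and this plane meets the interior of the central GIT-chamber $\mathrm{cone}(q_4,q_5,q_6)$, since $q_5=(1,0,1)$ and $q_6=(0,1,1)$ lie on opposite sides of it. So for that choice your $P$ has strictly more vertices than $\Lambda(\aa,Q)$ has maximal cones. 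Restricting the sum to $\Omega_\aa^{(k)}$ removes the ambiguity, but you would still owe a proof that no facet hyperplane of a full-dimensional orbit cone meets a chamber interior; that is precisely where the substance of the cited Altmann--Hausen result lies.
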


\begin{proof}
The first statement is~\cite[Cor.~10.4]{polyhedraldivisors}.
The second claim follows from the first one 
and~\cite[Sec.~3]{groebfan}.
\end{proof}

\begin{remark}
\label{rem:tables}
We compare the usage of Algorithm~\ref{algo:chamberoc} 
(in~\ref{algo:gitfantraversal})
to that of~\ref{algo:chamberocv2}.
As a test, we compute the GIT-fans of the maximal
torus action on the (affine cones over the) Grassmannians 
$G(2,5)$ and $G(2,6)$, using a \texttt{Maple/convex} implementation.
The following table lists the total number of $\aa$-face tests 
and the total number of cones $\vartheta$ entering 
the fourth line of Algorithms~\ref{algo:chamberoc} 
and~\ref{algo:chamberocv2}

\begin{center}
\footnotesize
\begin{tabular}{ccccc}
 	\thickhline
        & \multicolumn{2}{c}{Algorithm~\ref{algo:gitfantraversal} with~\ref{algo:chamberoc}} & \multicolumn{2}{c}{Algorithm~ \ref{algo:gitfantraversal} with~\ref{algo:chamberocv2}}\\
	& $\sharp$ \aa-face-tests & $\sharp$ cones $\vartheta$ & $\sharp$ \aa-face-tests & $\sharp$ cones $\vartheta$ \\ \hline
	$G(2,5)$ & $300$ & $21$ & $469$ & $20$ \\ 
	$G(2,6)$ & $6574$ & $50$ & $21012$ &  $52$ \\ \thickhline
\end{tabular}
\end{center}

\noindent
Note that in Algorithm~\ref{algo:chamberoc}, the \aa-face 
tests concern faces of lower dimension than in
 Algorithm~\ref{algo:chamberocv2} and thus are even faster.
\end{remark}

\goodbreak
\begin{remark}
\begin{enumerate}
\item 
Intermediate storage of occurring cones and their intersections in 
Algorithms~\ref{algo:chamberoc} and~\ref{algo:chamberocv2} 
saves time.
 \item  
The traversal of the GIT-fan can take advantage of symmetries as explained in~\cite[Ch. 3.1]{symmfans}.
\end{enumerate}
\end{remark}


\section{Computing $\aa$-faces}
\label{section:afaces}

Let $X \subseteq \KK^r$ be the zero set of 
an ideal $\aa \subseteq \KK[T_1,\ldots, T_r]$.
Here we compute torus orbits of $\KK^r$ 
intersecting $X$. In the notation of 
section~\ref{section:chambers}, this means to 
determine the $\aa$-faces $\gamma_0 \preceq \gamma$ 
of the orthant $\gamma = \QQ_{\geq 0}^r$.

Given a face $\gamma_0\seite\gamma$ 
and a polynomial $f\in\KK[T_1,\ldots,T_r]$, we write 
$f_{\gamma_0}:=f(T_{\gamma_0})\in\KK[T_{\gamma_0}]$
where $T:=(T_1,\ldots,T_r)$, 
i.e. we replace 
each $T_i$ with zero if $e_i\not\in\gamma_0$. 
Let 
$\aa_{\gamma_0}:= \< f_{\gamma_0};\,f\in \aa\>\subseteq \KK[T_{\gamma_0}]$.
A direct $\aa$-face test is the following, 
based on a radical membership problem.

\begin{remark}
\label{rem:afacesnaive}
A  face $\gamma_0\seite\gamma$ is an $\aa$-face 
if and only if 
$\prod_{e_i\in\gamma_0}T_i\not\in\sqrt{\aa_{\gamma_0}}$. 
\end{remark}

This leads to a Gr\"obner based way to decide 
whether a given $\gamma_0 \preceq \gamma$ 
is an $\aa$-face.
The main aim of this section is speed up this 
direct approach by dividing out all possible 
torus symmetry.
This is done in A
lgorithm~\ref{algo:afacespushed}.
Further possible improvements are discussed at 
the end of the section.

First consider any torus $\TT$ and an ideal 
$\cc\subseteq \OO(\TT)$.
Let $H\subseteq \TT$ be the maximal subgroup leaving 
$V(\TT;\,\cc)$ invariant
and denote by $\pi\colon\TT \to \TT/H$ the quotient 
map.
To describe $\pi$ explicitly, we use the 
correspondence between integral matrices 
and homomorphisms of algebraic tori:
every $n \times k$ matrix $A$ 
defines a homomorphism $\alpha\colon\TT^k\to\TT^n$ 
by sending 
$t\in\TT^k$ to $(t^{A_{1*}},\ldots,t^{A_{n*}})\in\TT^n$
where the $A_{i*}$ are the rows of $A$.

\begin{remark}
\label{rem:gradiator}
The map $\pi\colon\TT^k\cong \TT \to \TT/H\cong \TT^n$ 
is given by any $n \times k$  matrix $P$ of full rank satisfying
\[
 \ker(P)\ =\ 
 \bigcap_{g\in \cc} \ker(P_g)\,,
\]
where to $g=a_0T^{\nu_0}+\ldots+a_mT^{\nu_m}\in\cc$ we assign 
the $m \times k$ matrix $P_g$ with rows 
$\nu_1 - \nu_0,\ldots,\nu_m - \nu_0$.
\end{remark}

\begin{remark}
\label{rem:gradiator:algo}
Fix a generating set $G:=(g_1,\ldots,g_l)$ of 
$\cc\subseteq\KK[T_1^{\pm 1},\ldots,T_k^{\pm 1}]$.
Let $P_{G}$ be the stack matrix, i.e. the vertical concatenation,
 of $P_{g_1},\ldots, P_{g_l}$.
Compute the Hermite normal form $D = U\cdot P_{G}$ 
with a unimodular matrix $U$.
Choose $P$ as the matrix consisting of the 
upper non-zero rows of $D$.
Then $P$ describes $\pi\colon\TT^k\cong \TT \to \TT/H\cong \TT^n$.
\end{remark}

\begin{proof}
Clearly, $P$ is of full rank.
Since the exponent vectors of each $g\in\cc$ are 
linear combinations of the exponent vectors of 
$g_1,\ldots,g_l$, we have
\[
\ker(P)
= \ker(P_{G}) 
= \bigcap_{i=1}^l \ker(P_{g_i})
= \bigcap_{g\in\cc} \ker(P_{g})\,.
\]
\end{proof}

A \textit{push forward} of $g\in\cc$ under 
$\pi$ is a $h\in \OO(\TT/H)$ satisfying 
$\pi^*h = T^\mu g$ for some monomial $T^\mu$;
we simply write $\pi_*g$ for any such $h$ and 
\[
\pi_*\cc := \<\pi_*g;\ g\in \cc\>\subseteq \OO(\TT/H)\,. 
\]

\begin{remark}
\label{algo:pushforward}
Let $P\colon\ZZ^k\to \ZZ^n$ be as in~\ref{rem:gradiator}
and let $g= a_0T^{\nu_0} + \ldots + a_mT^{\nu_m}\in \cc$.
Compute a Smith normal form $D= U\cdot P\cdot V$ with unimodular matrices $U,V$.
Define $B := P_g\cdot V\cdot \sigma\cdot U^{-1}$ where 
$\sigma$ is a rational section for $D$
 and $P_g$ is as in~\ref{rem:gradiator}.
Then there is $\mu\in\ZZ_{\geq 0}^n$ such that
\[
\pi_*g = T^\mu\left(a_0 + a_1T^{B_{1*}}+\ldots +a_mT^{B_{m*}}\right)\in \KK[T_1,\ldots,T_n]\,.
\]
\end{remark}

\begin{proof}
Let $\pi_g\colon\TT^k\to\TT^m$ and 
$\beta\colon\TT^n\to\TT^m$ be the maps of tori 
defined by the matrices $P_g$ and $B$.
Clearly, $g = T^{\kappa}\pi_g^*h$ for 
$h:=a_0 + a_1T_1 + \ldots + a_mT_m\in \OO(\TT^m)$ 
and some $\kappa\in \ZZ_{\geq 0}^k$.
Each $g\in\cc$ is $H$-homogeneous.
This implies $\ker(P)\subseteq\ker(P_g)$, 
so there is a unique integral matrix $B'$ such that 
$P_g = B'\cdot P$.
In particular, $B=B'$ is integral.
Therefore, $g=T^\kappa\pi^*(\beta^*h)$.
\end{proof}

We now specialize to the case of $\aa$-face-verification.
Given $\gamma_0\seite\gamma$, let 
$H(\gamma_0) \subseteq \TT^r_{\gamma_0}$ be the maximal 
subgroup leaving $V(\TT^r_{\gamma_0};\,\aa_{\gamma_0})$ 
invariant.
Our approach reduces the dimension of the problem by using
\[
 V(\TT^r_{\gamma_0};\,\aa)\,\not=\,\emptyset
\quad \gdw \quad
V\left(\TT^r_{\gamma_0}/H(\gamma_0);\,\pi_*\aa_{\gamma_0}\right)\,\not=\,\emptyset\,.
\]

\begin{algorithm}[\aa-face verification]\label{algo:afacespushed}
Let $\aa = \< f_1,\ldots,f_s\>\subseteq\KK[T_1,\ldots,T_r]$ be an ideal 
and let  $\gamma_0\seite\gamma$.
Set $g_i := (f_i)_{\gamma_0}$ and 
$G := (g_1,\ldots,g_s)$.

\begin{enumerate}
\item[\tiny 1] 
Use~\ref{rem:gradiator:algo} to compute a
matrix $P$ representing 
$\pi\colon \TT^r \to \TT^r_{\gamma_0}/H(\gamma_0)$
\item[\tiny 2] 
Apply~\ref{algo:pushforward} to $P$ to obtain
$\pi_*G:=(\pi_*g_1,\ldots,\pi_*g_s)$
\item[\tiny 3] 
{\bf if} $T_1\cdots T_n \in \sqrt{\langle \pi_*G\rangle}\subseteq \KK[T_1,\ldots,T_n]$
\item[\tiny 4] \myident {\bf return} \textsc{false}
\item[\tiny 5] {\bf return} \textsc{true}
\end{enumerate}
\end{algorithm}

\begin{proof}
The map $\pi$ is a good quotient for the 
$H(\gamma_0)$-action on $\TT^r_{\gamma_0}$.
Consequently,  we have
\[
 \pi\left(\bigcap_{i=1}^s V\left(\TT^{r}_{\gamma_0};\ g_i\right)\right) 
= \bigcap_{i=1}^s \pi\left(V\left(\TT^{r}_{\gamma_0};\,g_i\right)\right)
=  V\left(\TT^{n};\  \pi_*g_1,\ldots,\pi_*g_s\right)
\,
\]
by standard properties of good quotients~\cite[p.~96]{kraft:geometrische}.
This shows that $V(\TT^r_{\gamma_0};\ \aa_{\gamma_0})\not=\emptyset$ 
if and only if $V(\TT^n;\ \pi_*G))\not=\emptyset$.
\end{proof}

\begin{remark}
If the total number of terms occurring among the generators is 
low compared to the number of variables in the sense 
that $P=P_G$ in the first line of Algorithm~\ref{algo:afacespushed},
then we might speed up the algorithm
using linear algebra as follows.
Each term $\pi_*g_i$ is linear by construction.
Solve the linear system of equations $\pi_*G = 0$.
Then $\gamma_0$ is an $\aa$-face if and only if 
there is a solution in $\TT^n$.
\end{remark}

Let us briefly recall the connection to tropical geometry, 
compare e.g. \cite{jensen:tropvars}.
Given a monomial-free ideal $\aa\subseteq \KK[T_1,\ldots,T_r]$, 
its \textit{tropical variety} is
\[
\trop{\aa}
\ := \
\bigcap_{f\in\aa} \trop{f}
\ \subseteq \
\QQ^r,
\]
where $\trop{f}$ is the support of the codimension one 
skeleton of the normal fan of the Newton polytope of $f$.
By \cite{tevelev}, 
\begin{align}
 \gamma_0\seite\gamma\text{ is an \aa-face }
\qquad
\gdw
\qquad
\trop{\aa} \cap(\gamma_0^*)^\circ\not=\emptyset\,.
\label{eq:tevelev}
\end{align}

Fixing a fan structure on $\trop{\aa}$, this can 
be turned into a computable criterion.
Note however that $\trop{\aa}$ usually carries 
more information than needed to determine the
$\aa$-faces and is in general harder to compute 
(see \cite{jensen:tropvars} for an algorithm).

\begin{remark}\label{rem:afacesimprove}
To compute all $\aa$-faces, the number of calls to 
Algorithm \ref{algo:afacespushed} can be reduced 
by any of the following ideas.
\begin{enumerate}
 \item 
The \textit{tropical prevariety} of a generating set $(f_1,\ldots,f_s)$ of $\aa$ is the coarsest common refinement $\sqcap_i \Upsilon_i$ where $\Upsilon_i$ is the one-codimensional skeleton of the normal fan of the Newton polytope of $f_i$.
Then each face $\gamma_0\seite\gamma$ whose dual face $\gamma_0^*$ does not satisfy equation (\ref{eq:tevelev}) w.r.t $\sqcap_i \Upsilon_i$
is not an $\aa$-face.
 \item 
A face $\gamma_0\seite\gamma$ is not an $\aa$-face if and only if there is $f\in\aa$ such that exactly one vertex of the newton polytope of $f$  lies in $\gamma_0$; also compare \cite[Prop. 9.3]{cc1}.
Choosing any subset of $\aa$, we may identify some faces $\gamma_0\seite\gamma$ that are no $\aa$-faces.
 \item \textit{Veronese embedding}: 
Let $\gamma_0\seite\gamma$ be such that there are (classically) homogeneous generators $g_1,\ldots,g_s$ of $\aa_{\gamma_0}$ of degree $d\in \ZZ_{\geq 0}$.
The images of the $g_i$ under
\[
 \KK[T_{\gamma_0}] \to \KK[S_\mu;\ \mu_1+\ldots +\mu_r = d]\ ,\qquad
 T^\mu \mapsto S_\mu
\]
give a linear system of equations with coefficient matrix $A$.
If a Gauss-Jordan normal form of $A$ contains a row with exactly one non-zero entry, $\gamma_0$ is no $\aa$-face. 
Adding redundant generators to $\aa_{\gamma_0}$ refines this procedure.
\item 
Let $\sigma\in S_r$ be a permutation of (the indices of) the variables $T_1,\ldots,T_r$
that keeps the set of generators of $\aa$ invariant.
Then 
\[
\gamma_0\seite\gamma \text{ $\aa$-face} 
\qquad\gdw\qquad
\cone{e_{\sigma(i)};\ e_i\in\gamma_0} \text{ $\aa$-face }.
\]
Some of those permutations can be computed by assigning a both edge- and vertex-colored graph to the generators of $\aa$ and computing its automorphism group, e.g. using \cite{nauty}. 
\end{enumerate}
\end{remark}

\begin{remark}
The efficiency of Algorithm \ref{algo:afacespushed} 
depends on the algorithms used for both 
Gr\"obner bases and Smith normal forms.
An implementation using the respective built 
in functions of \texttt{Maple} gave the following 
timings. 

\begin{center}
 \footnotesize
\begin{tabular}{cccc}
    \thickhline & remark \ref{rem:afacesnaive} & algorithm  \ref{algo:afacespushed} with \ref{rem:afacesimprove}(2)\\ \hline
    \aa-faces of $\aa_{2,5}$ & $21$\,{\rm s} & $10$\,{\rm s}  \\
    \aa-faces of $\aa_{2,6}$ & $16$\,{\rm min} & $76$\,{\rm s}  \\ 
    \aa-faces of $\aa_{2,7}$ & $>\,3$ days  &  $24.8$\,{\rm h}  \\ 
    \aa-faces of $\aa_{2,3,3}$ & $4.03$\,{\rm h} &  $44.1$\,{\rm min}  \\ \thickhline
\end{tabular}
\end{center}

\noindent
There, $\aa_{2,n}$ stands for the respective 
Pl\"ucker ideal
and $\aa_{2,3,3}$ denotes the defining ideal of the 
Cox ring of the space $X(2,3,3)$ of complete rank two 
collineations~\cite[Thm. 1]{kollin}.
\end{remark}


\section{Examples}\label{section:examples}

We consider torus actions on the affine cone over 
the Grassmannian $G(2,n)$ induced by a diagonal 
action on the Pl\"ucker coordinate space $\KK^r$, 
where $r=\binom{n}{2}$.
Such actions will be encoded by assigning the variable 
$T_i$ the $i$-th column of a matrix $Q$.
Moreover, we write $\aa_{2,n}\subseteq \KK[T_1,\ldots,T_{r}]$ 
for the Pl\"ucker ideal.

We compute both, the GIT-fan of the torus action on
$V(\KK^r;\,\aa_{2,n})$ as well as the GIT fan of the ambient 
space $\KK^r$.
The latter coincides with the so-called 
\textit{Gelfand Kapranov Zelevinsky decomposition} 
$\gkz(Q)$, i.e. the coarsest common refinement of 
all normal fans having their rays among the
cones over the columns of $Q$.
In general, the Gelfand Kapranov Zelevinsky decomposition
is a refinement of the GIT-fan.
See \cite{coxlittleschenck} for a toric background.

Below, the drawings show (projections of) the intersections 
of the respective fans with the standard simplex.


\begin{example}
 For $n=4$, the ideal 
$\aa_{2,4} = \< T_1T_6-T_2T_5+T_3T_4\>\subseteq\KK[T_1,\ldots,T_6]$ 
is homogeneous with respect to 
\begin{center}
\begin{minipage}{.4\textwidth}
 \begin{align*}
Q\ :=\ 
\begin{bmatrix}
1 & 0 & 0 & 1 & 1 & 0\\
0 & 1 & 0 & 1 & 0 & 1\\
0 & 0 & 1 & 0 & 1 & 1
\end{bmatrix}\,.
\end{align*}
\end{minipage}
\quad
\begin{minipage}{.2\textwidth}
\begin{center}
\includegraphics{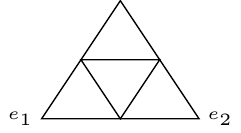}
\tiny
$\Lambda(\aa_{2,4},Q)$
\end{center}
\end{minipage}
\quad
\begin{minipage}{.2\textwidth}
\begin{center}
\includegraphics{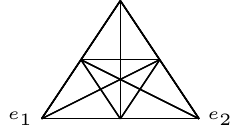}
\tiny
${\rm GKZ(Q)}$
\end{center}
\end{minipage}
\end{center}

Using Algorithm \ref{algo:gitfantraversal}, we obtain the four maximal 
GIT-chambers of $\Lambda(\aa_{2,4},Q)$.
The finer fan $\gkz(Q)$ has twelve maximal cones.
\end{example}


 \begin{example}
 For $n=5$, the ideal $\aa_{2,5} \subseteq\KK[T_1,\ldots,T_{10}]$ is homogeneous with respect to
\vspace*{-.3cm}
\begin{center}
\begin{minipage}{5cm}
 \begin{align*}
Q\ =\ 
\begin{bmatrix}
1 & 0 & 0 & 0 & 1 & 1 & 1 & 0 & 0 & 0\\
0 & 1 & 0 & 0 & 1 & 0 & 0 & 1 & 1 & 0\\
0 & 0 & 1 & 0 & 0 & 1 & 0 & 1 & 0 & 1\\
0 & 0 & 0 & 1 & 0 & 0 & 1 & 0 & 1 & 1
\end{bmatrix}\,.
\end{align*}
\end{minipage}
\hspace*{-.4cm}
\begin{minipage}{3cm}
\begin{center}
\includegraphics{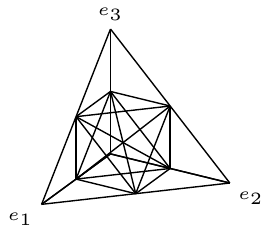}
\tiny
$\Lambda(\aa_{2,5},Q)$
\end{center}
\end{minipage}
\hspace*{-.4cm}
\begin{minipage}{3cm}
\begin{center}
\includegraphics{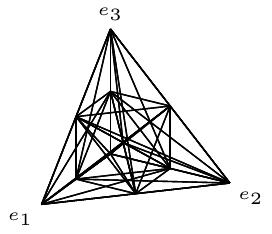}
\tiny
${\rm GKZ}(Q)$
\end{center}
\end{minipage}
\end{center}

By Algorithm \ref{algo:gitfantraversal}, there are twelve four-dimensional cones in $\Lambda(\aa_{2,5}, Q)$
whereas $\gkz(Q)$ contains $336$ such cones.
\end{example}


\begin{example}
 For $n=6$, the ideal $\aa_{2,6}\subseteq \KK[T_1,\ldots,T_{15}]$ is homogeneous with respect to 

\begin{center}
 \begin{minipage}{7cm}
\begin{align*}
Q\,=\,
\begin{bmatrix}
1 & 0 & 0 & 0 & 0 & 1 & 1 & 1 & 1 & 0 & 0 & 0 & 0 & 0 & 0\\
0 & 1 & 0 & 0 & 0 & 1 & 0 & 0 & 0 & 1 & 1 & 1 & 0 & 0 & 0\\
0 & 0 & 1 & 0 & 0 & 0 & 1 & 0 & 0 & 1 & 0 & 0 & 1 & 1 & 0\\
0 & 0 & 0 & 1 & 0 & 0 & 0 & 1 & 0 & 0 & 1 & 0 & 1 & 0 & 1\\
0 & 0 & 0 & 0 & 1 & 0 & 0 & 0 & 1 & 0 & 0 & 1 & 0 & 1 & 1
\end{bmatrix}\,.
\end{align*}
\end{minipage}
\ 
\end{center}
Using Algorithm \ref{algo:gitfantraversal}, we obtain the $81$ five-dimensional cones of $\Lambda(\aa_{2,6},Q)$. 
The fan $\gkz(Q)$ has $61920$ such cones.
\end{example}



\begin{thebibliography}{0}
\bibitem{polyhedraldivisors}
K.~Altmann, J.~Hausen: 
Polyhedral Divisors and Algebraic Torus Actions.
{\it Mathematische Annalen} {\bf 334}(3) (2006), 557--607.
%
\bibitem{coxrings}
I.~Arzhantsev, U.~Derenthal, J.~Hausen, A.~Laface:
{\it Cox rings}. Preprint, \texttt{arXiv:1003.4229}; see also the 
authors' webpages.
%
\bibitem{gitviacoxrings}
I.~Arzhantsev, J.~Hausen:
Geometric invariant theory via Cox rings.
{\it Journal of Pure and Applied Algebra} {\bf 213}(1) (2009), 154--172.
%
\bibitem{reversesearch}
D.~Avis, K.~Fukuda:
Reverse Search for Enumeration.
{\it Discr. Appl. Mathematics} {\bf 65}(1-3) (1993), 
21--46.
%
\bibitem{cc1}
F.~Berchtold, J.~Hausen:
Cox Rings and Combinatorics.
{\it Trans. Am. Math. Soc.} {\bf 359}(3) (2007), 1205--1252.
%
\bibitem{hausen:amplecone}
F.~Berchtold, J.~Hausen:
GIT-Equivalence beyond the ample cone.
{\it Michigan Math. J.} {\bf 54}(3) (2006), 483--515
%
\bibitem{jensen:tropvars} 
T.~Bogart, A.~Jensen, D.~Speyer, B.~Sturmfels, R.~Thomas:
Computing tropical varieties.
{\it Journal of Symbolic Computation} {\bf 42}(1-2) (2007), 54--73.
%
\bibitem{coxlittleschenck}
D.~Cox, J.~Little, H.~Schenck:
{\it Toric Varieties}.
(Graduate Studies in Mathematics.
American Mathematical Society, 2011).
%
\bibitem{dolgachevhu:GIT}
I.~Dolgachev, Y.~Hu:
Variation of geometric invariant theory quotients.
{\it Pub. Mathematiques} {\bf 87}(1) (1998), 5--51
%
\bibitem{convex}
M.~Franz:
{\it Convex -- a Maple package for convex geometry}.
(Available at \texttt{http://www.math.uwo.ca/~mfranz/convex/})
%
\bibitem{groebfan}
K.~Fukuda, A.~Jensen, R.~Thomas:
Computing Gr\"obner fans.
{\it Math. Comput.} {\bf 76}(260) (2007),
2189--2212.
%
\bibitem{fulton}
W.~Fulton:
{\it Introduction to toric varieties}.
(2nd corrected printing.
Annals of mathematics studies 131,
Princeton University Press, 1997).
%
\bibitem{kollin}
J.~Hausen, M.~Liebend\"orfer:
{\it The Cox ring of the space of complete rank two collineations}.
Preprint, \texttt{arXiv:1110.1171}.
%
\bibitem{mdsgit}
Yi~Hu, S.~Keel:
Mori dream spaces and GIT. 
Dedicated to William Fulton on the occasion of his 60th birthday. 
{\it Michigan Math. J.} {\bf 48}(1)  (2000), 331--348. 
%
\bibitem{symmfans}
 A.~Jensen:
 Traversing Symmetric Polyhedral Fans.
 {\it ICMS'10 Proc. 3rd Intl. Congr. on Mathematical software}, (Springer, 2010), 282--294.
%
\bibitem{gitfanlib}
S.~Keicher:
{\it gitfanlib -- a package for GIT-fans}.
(Available at \texttt{http://www.mathematik.uni-tuebingen.de/\textasciitilde keicher/gitfanlib/}).
%
\bibitem{kraft:geometrische}
H.~Kraft: 
\textit{Geometrische Methoden in der Invariantentheorie}.
(Aspekte der Mathematik. Vieweg, 1985).
%
\bibitem{pointloc}
D.~Liu:
A note on point location in arrangements of hyperplanes.
{\it Inf. Process. Lett.} {\bf 90}(2) (2004), 93--95.
%
\bibitem{nauty}
B.~McKay:
{\it Nauty -- a program for computing automorphism groups of graphs and digraphs}.
(Available at 
\texttt{http://cs.anu.edu.au/~bdm/nauty/}).
%
\bibitem{mumford:GIT}
D.~Mumford, J.~Fogarty, F.~Kirwan:
\textit{Geometric invariant theory}. 
(Third edition. Ergebnisse der Mathematik und ihrer Grenzgebiete (2),
34. Springer-Verlag, Berlin, 1994).
%
\bibitem{tevelev}
J.~Tevelev:
Compactifications of subvarieties of tori.  
{\it Amer. J. Math.} {\bf 129}(4)  (2007), 1087--1104. 
%
\bibitem{thaddeus:GIT}
M.~Thaddeus:
Geometric Invariant Theory and Flips.
{\it J. Amer. Math. Soc.} {\bf 9}(3)  (1996),  691--723. 

\end{thebibliography}
\end{document}